\newcommand{\shrinkmargins}[1]{
  \addtolength{\textheight}{#1\topmargin}
  \addtolength{\textheight}{#1\topmargin}
  \addtolength{\textwidth}{#1\oddsidemargin}
  \addtolength{\textwidth}{#1\evensidemargin}
  \addtolength{\topmargin}{-#1\topmargin}
  \addtolength{\oddsidemargin}{-#1\oddsidemargin}
  \addtolength{\evensidemargin}{-#1\evensidemargin}
  }
\newcommand{\field}[1]{\mathbb{#1}}
\newcommand{\Q}{\field{Q}}
\newcommand{\Z}{\field{Z}}
\newcommand{\C}{\field{C}}
\newcommand{\beq}{\begin{displaymath}}
\newcommand{\eeq}{\end{displaymath}}
\newcommand{\beqn}{\begin{equation}}
\newcommand{\eeqn}{\end{equation}}
\theoremstyle{plain}
\newtheorem{theorem}{Theorem}[section]
\newtheorem{proposition}[theorem]{Proposition}
\newtheorem{corollary}[theorem]{Corollary}
\newtheorem{lemma}[theorem]{Lemma}
\theoremstyle{definition}
\newtheorem{definition}[theorem]{Definition}
\newtheorem*{teo}{Theorem}
\newtheorem*{coro}{Corollary}
\theoremstyle{remark}
\newtheorem{remark}[theorem]{Remark}
\newtheorem{question}[theorem]{Question}
\title[Modular forms arising from totally real cubic fields]{A space of weight 1 modular forms attached to totally real cubic number fields}
\author{Guillermo Mantilla-Soler \\ }
\date{}
\begin{document}

\maketitle

\begin{abstract}
 Let $d$ be a positive fundamental discriminant, and let $\mathcal{C}_{d}$ be the set of isomorphism classes of cubic number fields of discriminant $d$. For each $K \in \mathcal{C}_{d}$, we construct a weight 1 modular form $f_{K}$ with level $3^{\pm 1}d$ and nebentypus $\left( \frac{-3^{\pm 1}d}{\cdot} \right)$. We show that the form $f_{K}$ completely determines the field $K$. Moreover, we show that $\{f_{K} : K \in \mathcal{C}_{d}\}$ is a linearly independent set.

\end{abstract}
 
\section{Introduction}

Let $\delta$ be a fundamental discriminant, and let $\mathcal{C}_{\delta}$ be the set of isomorphism classes of cubic number fields of discriminant $\delta$. Recall that an integer is called {\it fundamental discriminant} if it is equal to the discriminant of a quadratic number field. Let $N$ be a positive integer and let $\epsilon$ be a Dirichlet character modulo $N$. Let $\displaystyle \mathcal{M}_{1}\left(\Gamma_{0}(N),  \epsilon \right)$ be the space of weight $1$ modular forms of level $N$ and nebentypus $\epsilon$. Suppose $\mathcal{C}_{\delta} \neq \emptyset$ and that $\delta <0$, i.e. cubic fields with at least one complex place. Then, associated to each $K \in \mathcal{C}_{\delta}$ theres is a weight 1 modular form \[\mathfrak{f}_{K} \in \displaystyle \mathcal{M}_{1}\left(\Gamma_{0}(|\delta|), \left( \frac{\delta}{\cdot} \right)\right)\]  such that:
\begin{enumerate}
\item the map $K  \mapsto \mathfrak{f}_{K}$ is injective.

\item the set $\{\mathfrak{f}_{K} : K \in \mathcal{C}_{\delta}\}$ is a linearly independent subset of $\displaystyle \mathcal{M}_{1}\left(\Gamma_{0}(|\delta|), \left( \frac{\delta}{\cdot} \right)\right)$.

\end{enumerate}

The above follows from a particular case of Weil-Langlands converse theorem. See \S \ref{clasica} for details. If instead of considering cubic fields ramified at infinity we consider totally real cubic fields, then it is not possible to apply Weil-Langlands to produce modular forms. The point is that in the totally real case the Galois representations involved are even. In this paper we provide for totally real cubic fields of fundamental discriminant an alternative construction of weight 1 modular forms satisfying properties (1) and (2) above. Given $K$ a cubic field with positive fundamental discriminant we define $\Theta(K):=f_{K}$ to be the form in Corollary \ref{defitheta}. For the value $d_{3}$ see Definition \ref{reflex} below. 
The following are the main resutls of this paper: 
\begin{teo}[cf. Theorem \ref{1to1}] {\it Let $d$ be a positive fundamental discriminant, and suppose that $\mathcal{C}_{d}\neq \emptyset$. Then, 
\begin{displaymath}
\begin{array}{cccc}
 \Theta: & \mathcal{C}_{d} &\rightarrow& \mathcal{M}_{1}\left(\Gamma_{0}(|d_{3}|), \left( \frac{d_{3}}{\cdot} \right)\right) \\  
           & K & \mapsto & f_{K}
\end{array}
\end{displaymath}
is injective.}
\end{teo}

\begin{coro}[cf. Corollary \ref{base}]
{\it Let $d$ be a positive fundamental discriminant, and suppose that $\mathcal{C}_{d}\neq \emptyset$. Then $\Theta(\mathcal{C}_{d})$ is a linearly independent subset of $\mathcal{M}_{1}\left(\Gamma_{0}(|d_{3}|), \left( \frac{d_{3}}{\cdot} \right)\right)$.}
\end{coro}

 \section{Weight 1 forms attached to non-totally real cubic fields.}\label{clasica} 
We briefly recall how from Weil-Langlands for $S_{3}$ one can obtain a family of linearly independent weight 1 modular forms parametrized by the set of isomorphism classes of cubic fields of a fixed negative discriminant. \\

Let $L$ be a cubic number with discriminant $\delta<0$, which is fundamental, and  let $\widetilde{L}$ be its Galois closure.

\[\xymatrix{  & \widetilde{L}\ar@{-}[dd]^{S_{3}}\ar@{-}[ld] \ar@{-}[rd]^{H \cong \Z/3\Z}    & \\
             L\ar@{-}[rd] & &  \Q(\sqrt{\delta})\ar@{-}[ld] \\
            & \Q &  }\]

By Identifying $S_{3}$ with ${\rm Gal}(\widetilde{L}/\Q)$, and considering the irreducible 2-dimensional representation of $S_{3}$, one obtains an irreducible dihedral representation of $\rho_{L}: {\rm Gal}(\overline{\Q}/\Q) \to {\rm GL}_{2}(\C)$. Such a representation is induced by a non-trivial representation of $H$. Since $d$ is a fundamental discriminat $\widetilde{L}$ is contained in the Hilbert class field of  $\Q(\sqrt{\delta})$. Thus, by the conductor formula for induced representations, we have that $\rho_{L}$ has conductor $|\delta|$. From the above diagram we see that $\det(\rho_{L})$ is the inflation of the non-trivial character of ${\rm Gal}(\Q(\sqrt{\delta})/\Q)$. In other words, for all prime $p$ not dividing $d$ we have that $\det(\rho_{L}({\rm Frob}_{p}))=\left( \frac{\delta}{p} \right)$. In particular, $\rho_{L}$ is an odd representation. Let $\displaystyle L(s,\rho_{L} )=\sum_{n=1} \frac{a_{n}}{n^{s}}$ be the Artin $L$-series attached to $\rho_{L}$ and let $\displaystyle \mathfrak{f}_{L}=\sum_{n=1}a_{n}q^{n}$ be the $q$-expansion of $L(s, \rho_{L})$.  It follows from \cite[Theorem 1]{serre} that \[\mathfrak{f}_{L} \in \mathcal{M}_{1}\left(\Gamma_{0}(|\delta|), \left( \frac{\delta}{\cdot} \right) \right).\]  Moreover, each $\mathfrak{f}_{L}$ is a normalized cuspidal eigenform.

\begin{remark}\label{Aeq}
Notice that $\zeta_{\widetilde{L}}(s)=L^{2}(s, \rho_{L})\zeta_{\Q(\sqrt{\delta})}.$
\end{remark}

\begin{theorem}\label{negativebase}
The following is injective:
\begin{displaymath}
\begin{array}{cccc}
 \Phi: & \mathcal{C}_{\delta} &\rightarrow& \mathcal{M}_{1}\left(\Gamma_{0}(|\delta|), \left( \frac{\delta}{\cdot} \right)\right) \\  
           & L & \mapsto & \mathfrak{f}_{L}
\end{array} 
\end{displaymath}

Furthermore, $\Phi(\mathcal{C}_{\delta})$ is a linearly independent subset of $\mathcal{M}_{1}\left(\Gamma_{0}(|\delta|), \left( \frac{\delta}{\cdot} \right)\right)$.
\end{theorem}

\begin{proof}
Let $L, L_{1}$ be cubic fields of discriminant $\delta$. Then  \[\Phi(L)=\Phi(L_{1}) \iff L(s,\rho_{L} )= L(s,\rho_{L_{1}} ).\]  It follows, see Remark \ref{Aeq}, that the fields $\widetilde{L}$ and $\widetilde{L_{1}}$ are Arithmetically equivalent (See \cite{Perlis}). Since $L$ and $\widetilde{L}$ are both Galois over $\Q$ we have that  $\widetilde{L} \cong \widetilde{L_{1}}$ and consequently $L \cong L_{1}.$ Since a set of normalized cuspidal eigenforms non containing pairs of linearly dependent forms is a linearly independent set, the linearly independence of  $\Phi(\mathcal{C}_{d})$ follows from the injectivity of $\Phi$. 
\end{proof}

The construction above gives a natural subspace of \[\mathcal{M}_{1}\left(\Gamma_{0}(|\delta|), \left( \frac{\delta}{\cdot} \right)\right),\] which has a canonical basis indexed by cubic fields of discriminant $\delta$.  
\begin{definition}\label{cubespace}
We called this space $V_{\delta} :=\mathrm{Span}_{\C}\Phi(\mathcal{C}_{\delta})$ the {\it $\delta$-cubic space}.
\end{definition}

\begin{remark}
Notice that $V_{\delta}$ is actually a Hecke invariant subspace of the cuspidal subspace of $\mathcal{M}_{1}\left(\Gamma_{0}(|\delta|), \left( \frac{\delta}{\cdot} \right)\right).$
\end{remark}

\section{Totally real cubic fields}   

The main goal of this paper is to exhibit a canonical subspace of a space of weight 1 modular forms that is parametrized by the set of isomorphism classes of cubic fields of a fixed fundamental discriminant.  In case that the fields have ramification at infinity, the construction is classic and it is explained in \S \ref{clasica}. In this section we show how to construct such a subspace for cubic fields with no ramification at infinity i.e., totally real cubic fields.

\subsection{Main construction:}

Let $K$ be a number field and let $O^{0}_K \subset K$ be the set of integral elements with zero trace. The 
map
\begin{displaymath}
\begin{array}{cccc}
\mathrm{t}^{0}_K : & O^{0}_K  &  \rightarrow & \Z  \\  & x & \mapsto &
\mathrm{tr}_{K/\Q}(x^2)
\end{array}
\end{displaymath}
defines a quadratic form with corresponding bilinear form
\[T_K(x,y ) = \mathrm{tr}_{K/\Q}(xy)\Big |_{O^{0}_K}.\]

\begin{definition}\label{reflex}
Let $d$ be a non-zero integer. The {\it $3$-reflection}  $d_{3}$ of $d$ is the integer defined by $\displaystyle d_{3} := -\frac{3d}{\mathrm{gcd}(3,d)^2}$.
\end{definition}

\begin{proposition}
Let $K$ be a cubic field with fundamental discriminant $d$. Then, \[\mathrm{t}_K :=
\frac{\mathrm{t}^{0}_{K}}{2\mathrm{gcd}(3,d)}\] is a primitive
integral binary quadratic form of discriminant $d_{3}$. 
\end{proposition}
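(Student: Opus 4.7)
The plan is to verify in sequence four assertions: that $\mathrm{t}^{0}_{K}$ is a rank-$2$ integer binary form, that the Gram determinant of $T_K$ on $O^{0}_{K}$ equals $3d$, that dividing by $2\gcd(3,d)$ yields an integral form, and that the result is primitive.

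For the rank and the Gram determinant I would use an explicit basis. Choose a $\Z$-basis $1,\alpha,\beta$ of $O_K$ with $a=\mathrm{tr}(\alpha)$, $b=\mathrm{tr}(\beta)$, and form the trace-zero projections $\alpha'=3\alpha-a$, $\beta'=3\beta-b$. Then $L:=\Z\oplus\Z\alpha'\oplus\Z\beta'$ has index $9$ in $O_K$ by inspection of the change-of-basis determinant, and with respect to $(1,\alpha',\beta')$ the Gram matrix of the trace form is block-diagonal with an upper $1\times 1$ block of entry $3$ and a lower $2\times 2$ block whose determinant must equal $81d/3=27d$. To descend from $L_0:=\Z\alpha'+\Z\beta'$ to $O^{0}_{K}$ I would analyze the exact sequence $0\to O^{0}_{K}\to O_K\xrightarrow{\mathrm{tr}}\Z$. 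Because $d$ is fundamental we have $v_3(d)\in\{0,1\}$, so $3$ is at worst tamely ramified in $K$; a short local computation shows $\mathrm{tr}(O_K)=\Z$ in both the unramified and the partially ramified cases. Consequently $[O_K:\Z+O^{0}_{K}]=3$ and $[O^{0}_{K}:L_0]=3$, giving $\det\bigl(\text{Gram of }T_K|_{O^{0}_{K}}\bigr)=27d/9=3d$ and binary discriminant $-12d$ for $\mathrm{t}^{0}_{K}$.

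For integrality, divisibility by $2$ is automatic: if $x\in O^{0}_{K}$ has characteristic polynomial $X^3+pX+q$ then $\mathrm{tr}(x^2)=-2p$. When $3\mid d$ I would squeeze out an extra factor of $3$ locally. The decomposition $O_K\otimes\Z_3=\Z_3\times O_F$ with $F/\Q_3$ a tame ramified quadratic extension yields $O_K/3O_K\cong\F_3\times\F_3[\pi]/(\pi^2)$; writing $\bar x=(u,a+b\pi)$, the condition $\bar{\mathrm{tr}}(\bar x)=u+2a=0$ forces $u=a$, and a direct calculation then gives $\bar{\mathrm{tr}}(\bar x^2)=u^2+2a^2=3a^2=0$ in $\F_3$. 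Polarization extends this to $3\mid T_K(x,y)$ for all $x,y\in O^{0}_{K}$, so every coefficient of $\mathrm{t}^{0}_{K}$ is divisible by $2\gcd(3,d)$. Dividing, $\mathrm{t}_K$ is integer-valued and its discriminant scales to $-12d/(2\gcd(3,d))^2=-3d/\gcd(3,d)^2$.

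For primitivity I would check case by case (on the parity of $d$ and on whether $3\mid d$) that $D:=-3d/\gcd(3,d)^2$ is itself a fundamental quadratic discriminant, using the congruence $-3\equiv 1\pmod 4$ and the fundamental-discriminant criterion. Then primitivity follows from the elementary observation that if an integer binary form has content $k$ and fundamental discriminant $D$, then $k^2\mid D$ and $D/k^2\equiv 0,1\pmod 4$; since fundamental discriminants are squarefree up to a possible factor of $4$, this forces $k=1$. The main obstacles in the plan are the bookkeeping for $[O^{0}_{K}:L_0]$ and the local divisibility-by-$3$ computation, both of which rely essentially on the fact that the fundamental-discriminant hypothesis excludes wild ramification at $3$.
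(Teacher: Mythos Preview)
Your argument is correct, but the paper takes a different route: it simply cites the author's earlier paper \cite{Manti}, invoking Corollary~5.4 there for the case $3\nmid d$ and Theorem~6.4 for the case $3\mid d$. No details are reproduced.

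Your approach is a genuine, self-contained proof. The index computation $L_0\subset O_K^0$ via surjectivity of the trace (which you correctly justify by excluding wild ramification at~$3$) gives the Gram determinant $3d$ and hence the binary discriminant $-12d$; the factor of~$2$ comes out cleanly from Newton's identity $\mathrm{tr}(x^2)=-2p$ for trace-zero~$x$; the extra factor of~$3$ when $3\mid d$ is handled by your local calculation in $\F_3\times\F_3[\pi]/(\pi^2)$, which is exactly the right picture since $3\|d$ forces the splitting $O_K\otimes\Z_3\cong\Z_3\times O_F$ with $F/\Q_3$ ramified quadratic; and primitivity follows because $d_3$ is itself fundamental, so no square content can be absorbed. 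Each step is sound. What your route buys is independence from the external reference; what the paper's citation buys is brevity, deferring all the lattice-index bookkeeping and local analysis to \cite{Manti}.
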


\begin{proof}
If $3 \nmid d$ this is precisely \cite[Corollary 5.4]{Manti}.  Otherwise, the result follows from  \cite[Theorem 6.4]{Manti}.
\end{proof}

It turns out that for a totally real cubic field $K$ the form $\mathrm{t}_K$ completely characterizes the field.

\begin{theorem}\label{mio}\cite[Theorem 6.5]{Manti}
Let $d$ be a positive fundamental discriminant and let $K$ and $L$ be two elements in $\mathcal{C}_{d}$. Then, \[K \cong L \iff \mathrm{t}_K  \cong \mathrm{t}_L,\] where the second $\cong$ refers to equivalence under the natural GL$_{2}(\Z)$-action.
\end{theorem}

\begin{definition}
Let $K$ be a totally real cubic field with fundamental discriminant $d$. Let \[f_{K}(z):= \sum_{x \in O_{K}^{0}} q^{\mathrm{t}_{K}(x)}\] where $q=e^{2\pi iz}$, and $z$ lies in the upper half plane.
\end{definition}

We have associated to $K$ a theta series given by the quadratic form $\mathrm{t}_{K}$. The following classic result of Schoeneberg \cite{sch} gives us some information about $f_{K}$. 

\begin{theorem}[Schoenberg]
Let $Q$ be a positive definite, integral quadratic form of dimension $2k$, and discriminant $\Delta$. Let \[\theta(Q,z):=\sum_{x \in \Z^{2k}} q^{Q(x)}\] be the theta series associated to $Q$. Then, 
\[\theta(Q,z) \in \mathcal{M}_{k}\left(\Gamma_{0}(|\Delta|),  \left(\frac{\Delta}{\cdot} \right) \right).\] 
\end{theorem}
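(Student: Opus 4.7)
The plan is to use the standard proof strategy for theta series of positive-definite integral quadratic forms: write $Q$ in terms of its Gram matrix, establish convergence and holomorphy on the upper half plane, verify the transformation under translation and inversion, and then bootstrap to invariance under all of $\Gamma_{0}(|\Delta|)$ with the correct nebentypus. Writing $Q(x) = \frac{1}{2} x^{T} A x$ with $A$ the symmetric Gram matrix (even diagonal, integer entries, positive definite, $\det A = |\Delta|$), positive-definiteness yields absolute and locally uniform convergence of $\theta(Q, z) = \sum_{x \in \Z^{2k}} e^{2\pi i z Q(x)}$ on the upper half plane, hence holomorphy. Integrality of $Q$ on $\Z^{2k}$ gives at once $\theta(Q, z+1) = \theta(Q, z)$.

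The analytic heart of the proof is the inversion formula, which I would derive via multi-dimensional Poisson summation applied to the Gaussian $x \mapsto \exp(-2\pi t Q(x))$ for $t > 0$ and then continued analytically to the upper half plane. The resulting identity has the shape
\[
\theta(Q, -1/z) \;=\; \frac{(-iz)^{k}}{\sqrt{|\Delta|}}\, \theta(Q^{*}, z),
\]
where $Q^{*}$ is the quadratic form attached to the dual lattice $A^{-1}\Z^{2k}$. To convert this into a transformation for a general $\gamma = \mat{a}{b}{c}{d} \in \Gamma_{0}(|\Delta|)$ with $c \neq 0$, I would use that $|\Delta|\, A^{-1}$ is integral so that $\frac{1}{|\Delta|}\Z^{2k}$ sits inside the dual lattice; this allows $\theta(Q^{*}, z)$ to be decomposed into a sum over cosets of $\Z^{2k}$ in $A^{-1}\Z^{2k}$, each summand being a theta series of $Q$ twisted by an additive character modulo $|\Delta|$. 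Substituting and simplifying should produce
\[
\theta(Q, \gamma z) \;=\; \left(\frac{\Delta}{d}\right)(cz+d)^{k}\, \theta(Q, z),
\]
which is the modularity claim.

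The main obstacle, as is well known for Schoeneberg's theorem, is the correct identification of the multiplier produced by iterating Poisson summation with the Kronecker character $\left(\frac{\Delta}{\cdot}\right)$. This ultimately reduces to the classical evaluation of quadratic Gauss sums modulo $|\Delta|$ together with quadratic reciprocity; the even-diagonal hypothesis on $A$ (equivalently, that $Q$ is integer valued on $\Z^{2k}$) is what makes the signs come out right. Once the multiplier is pinned down, holomorphy at the cusps of $\Gamma_{0}(|\Delta|)$ is painless: each cusp is $\Gamma_{0}(|\Delta|)$-equivalent to one whose local Fourier expansion has non-negative $q$-exponents, since $Q \geq 0$ on all of $\Z^{2k}$.
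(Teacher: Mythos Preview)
The paper does not give its own proof of Schoeneberg's theorem; it simply cites \cite[Theorem 20--20$^{+}$]{ogg} and moves on. Your outline is the standard proof strategy (Poisson summation for the inversion law, coset decomposition of the dual-lattice theta function, and Gauss-sum identification of the multiplier), which is essentially what one finds in Ogg and in Schoeneberg's original paper, so there is no substantive difference in approach to discuss.

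A small caution on details if you ever write this out in full: the exact relation between $\det A$ and what the paper calls $\Delta$ depends on the normalisation conventions for ``discriminant'' (the paper is mainly interested in the binary case, where $A=\mat{2a}{b}{b}{2c}$ and $\det A = -\Delta$), and the level in Schoeneberg's theorem is in general the \emph{level} of the lattice (the least $N$ with $NA^{-1}$ even) rather than $|\Delta|$ itself; these happen to coincide for the forms used in this paper but can differ in general. None of this affects the validity of your sketch for the purposes at hand.
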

See \cite[Theorem 20-20$^{+}$]{ogg}.

\begin{corollary}\label{defitheta}
Let $K$ be a totally real cubic field with fundamental discriminant $d$. Then, \[f_{K} \in  \mathcal{M}_{1}\left(\Gamma_{0}(|d_{3}|), \left( \frac{d_{3}}{\cdot} \right)\right)\]
\end{corollary}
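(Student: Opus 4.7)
My approach is to recognize $f_K$ as the theta series of the binary quadratic form $\mathrm{t}_K$ supplied by the preceding Proposition, and then apply Schoenberg's Theorem directly.

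First, I would choose a $\Z$-basis $\{e_1, e_2\}$ of $O_K^0$; this lattice is free of rank two since the trace map $O_K \to \Z$ is a surjection of free $\Z$-modules of ranks three and one. Via this basis, $\mathrm{t}_K$ corresponds to an ordinary binary integral quadratic form $Q$ on $\Z^2$, and the sum defining $f_K$ becomes the classical theta series $\theta(Q,z) = \sum_{v \in \Z^2} q^{Q(v)}$.

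Next I would verify that $Q$ satisfies the hypotheses of Schoenberg's Theorem. Integrality, primitivity, and discriminant $\Delta = d_3 = -3d/\gcd(3,d)^2$ are exactly the content of the preceding Proposition. Positive definiteness follows from $K$ being totally real: if $\sigma_1, \sigma_2, \sigma_3$ denote the real embeddings of $K$, then $\mathrm{tr}_{K/\Q}(x^2) = \sigma_1(x)^2+\sigma_2(x)^2+\sigma_3(x)^2$ is non-negative and vanishes only at $x=0$, and this property is preserved upon dividing by the positive integer $2\gcd(3,d)$. Finally, the dimension of $Q$ is $2$, so we are in the case $k=1$ of Schoenberg's Theorem.

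With all hypotheses in place, Schoenberg's Theorem immediately yields $f_K = \theta(Q,z) \in \mathcal{M}_1\!\left(\Gamma_0(|d_3|), \left(\frac{d_3}{\cdot}\right)\right)$. Note that the totally real hypothesis gives $d > 0$ and hence $d_3 < 0$, so $|d_3|$ and the Kronecker symbol $\left(\frac{d_3}{\cdot}\right)$ are unambiguous. I do not foresee any genuine obstacle here: the substantive work has already been done in the Proposition (which encodes the integrality and discriminant calculation of \cite{Manti}) and in Schoenberg's Theorem itself, so the corollary is really just the conjunction of the two.
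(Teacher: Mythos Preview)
Your proof is correct and follows exactly the same route as the paper: identify $f_K$ with the theta series of the binary form $\mathrm{t}_K$ from the preceding Proposition and invoke Schoenberg's Theorem with $k=1$ and $\Delta=d_3$. You simply spell out more of the verification (rank of $O_K^0$, positive definiteness from total reality), whereas the paper compresses everything into a single sentence.
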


\begin{proof} 
Since $\mathrm{t}_{K}$ is a binary form of discriminant $d_{3}$ the result is a particular case of Schoenberg's Theorem. 
\end{proof}

Let $d$ be a positive fundamental discriminant, and recall that $\mathcal{C}_{d}$ denotes the set of isomorphism classes of cubic number fields with discriminant $d$.  The following is the key result in the construction of a basis for the space of modular forms mentioned at the beginning of this section.   
\begin{theorem}\label{1to1}
  
The map 
\begin{displaymath}
\begin{array}{cccc}
 \Theta: & \mathcal{C}_{d} &\rightarrow& \mathcal{M}_{1}\left(\Gamma_{0}(|d_{3}|), \left( \frac{d_{3}}{\cdot} \right)\right) \\  
           & K & \mapsto & f_{K}
\end{array}
\end{displaymath}
is injective.
\end{theorem}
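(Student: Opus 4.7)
The plan is to factor the injectivity of $\Theta$ through two claims: (a) the theta series of a positive-definite integral binary quadratic form determines the form up to $\GL_2(\Z)$-equivalence; and (b) the class $[\mathrm{t}_K]$ in the narrow form class group $\Cl^+(d_3)$, considered modulo the inversion involution, determines $K$ up to isomorphism.

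For (a), the Fourier expansion of $f_K$ reads
\[
f_K(z) \;=\; \sum_{n \geq 0} r_{\mathrm{t}_K}(n)\, q^n, \qquad r_{\mathrm{t}_K}(n) := \#\{x \in O_K^0 : \mathrm{t}_K(x) = n\},
\]
so that $f_K = f_{K'}$ is equivalent to saying that the binary forms $\mathrm{t}_K, \mathrm{t}_{K'}$ (both of discriminant $d_3$) have identical representation numbers. A classical theorem for positive-definite integral binary forms then yields $\GL_2(\Z)$-equivalence; equivalently, $[\mathrm{t}_{K'}] = [\mathrm{t}_K]^{\pm 1}$ in $\Cl^+(d_3)$, the sign being accounted for by the reflection $(x,y) \mapsto (x,-y)$ which preserves the representation numbers but inverts the class.

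For (b), I would appeal to the class-field-theoretic picture from \cite{Manti}. Since $d$ is a fundamental discriminant it is not a perfect square, so $K$ is non-Galois and its Galois closure $L$ is a cyclic unramified cubic extension of $\Q(\sqrt{d})$. Via Scholz reflection, $L$ is classified by an orbit, under inversion, of a $3$-torsion element in $\Cl(\Q(\sqrt{d_3})) \cong \Cl^+(d_3)$; the main content of \cite{Manti} is the explicit identification of this orbit with $\{[\mathrm{t}_K], [\mathrm{t}_K]^{-1}\}$. Since $K$ is recovered from its Galois closure, non-isomorphic cubic fields of discriminant $d$ produce distinct unordered pairs, and combined with (a) this gives $K \cong K'$.

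The main obstacle is step (b): pinning down that the class of $\mathrm{t}_K$ in the form class group is exactly the reflection-theoretic invariant attached to $K$. This is the heart of the earlier work \cite{Manti}, and the intermediate results announced in section 3 presumably package the precise statement needed here. Step (a), by contrast, is a standard fact about binary theta series and should enter the argument as a black box.
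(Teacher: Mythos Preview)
Your two-step factorization---theta series determines the $\GL_2(\Z)$-class of $\mathrm{t}_K$, and this class determines $K$---is exactly the paper's argument. Two minor corrections on attribution: step~(a) is what section~3 supplies (Lemma~\ref{FirstTerms} proves it elementarily by noting that already the first two nonzero Fourier coefficients pin down the successive minima of the reduced form, hence the $\GL_2(\Z)$-class), while step~(b) is not in section~3 at all but is cited directly as \cite[Theorem~6.5]{Manti}, which gives $\mathrm{t}^0_K \sim \mathrm{t}^0_L \Rightarrow K \cong L$ without passing explicitly through the Scholz-reflection picture you sketch.
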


\begin{proof}
Suppose that $\Theta(K)=\Theta(L)$. Since binary positive deÞnite forms are determined up to
integral equivalence by their theta series (see \cite{watson}) we have that the  forms $\mathrm{t}_{K}$ and $\mathrm{t}_{L}$ are equivalent. Thus thanks to Theorem \ref{mio} we have that $K$ and $L$ are isomorphic.  \end{proof}

We add a proof of the following simple fact for the lack of a suitable reference.

\begin{lemma}\label{primitive}Let $Q$ and $Q_{1}$ be two integral binary quadratic forms of the same discriminant. If there exists a prime $p$ which is represented by $Q$ and $Q_{1}$, then the forms are equivalent. 
\end{lemma}

\begin{proof}
We may assume that $Q(x,y)=px^2+bxy+cy^2$, and that $Q_{1}(x,y)=px^2+b_{1}xy+c_{1}y^2$ where $0 \leq b,b_{1} \leq p$. Since $b^2-4pc=b^{2}_{1}-4pc_{1}$ we have that 
$2p \mid (b+\epsilon b_{1})$ where $\epsilon \in \{1,-1\}$. If $b+\epsilon b_{1} \neq 0$ then $2p \leq | b+\epsilon b_{1}| \leq b+b_1 \leq p +p =2p$, hence $b=p=b_1$ and $c=c_1$. If $b+\epsilon b_{1} = 0$ then $b=\pm b_1$, and since they are both non-negative $b=b_1$ and again $c=c_1$. In any case $Q=Q_1$.
\end{proof} 
%\begin{proposition}\label{hayp}
%Let $Q$ be a primitive, positive definite, integral binary quadratic form of discriminant $\Delta$. Then, there are primes represented by $Q$.
%\end{proposition}

%\begin{proof}
%Let $Cl(\Delta)$ be the set of SL$_{2}(\Z)$ classes of primitive, positive definite, integral binary quadratic forms of discriminant $\Delta$, and let $Cl(\Q(\sqrt{-\Delta}))$ be ideal class group of $\Q(\sqrt{-\Delta})$. Recall that there is a bijection $\Gamma_{\Delta} :  Cl(\Delta) \to Cl(\Q(\sqrt{-\Delta}))$, which in fact is a group isomorphism for the group structure on $Cl(\Delta)$ given by Gauss's composition (See \cite{buell} for details). From the definition of $\Gamma_{\Delta}$, and since conjugation acts as taking inverses on $Cl(\Q(\sqrt{-\Delta}))$, it follows that a prime $p$ is represented by $Q$ if and only if $p$ is the norm of a prime ideal $P \in \Gamma_{\Delta}([Q])$. The result follows by the above proposition with $K=\Q(\sqrt{-\Delta})$ and $[I]=\Gamma_{\Delta}([Q]).$
%\end{proof}

It is a classic result of Hecke that given a set of  inequivalent positive binary quadratic forms with a fixed fundamental discriminant, their theta series are linearly independent. From Hecke's result and Theorem \ref{1to1} we obtain:

\begin{corollary}\label{base}
Let $d$ be a positive fundamental discriminant. Then $\Theta(\mathcal{C}_{d})$ is a linearly independent subset of $\mathcal{M}_{1}\left(\Gamma_{0}(|d_{3}|), \left( \frac{d_{3}}{\cdot} \right)\right)$.
\end{corollary}

\begin{proof} We may assume that $\Theta(\mathcal{C}_{d})$ is non-empty.
Let $K_{1},..,K_{n}$ be a set of representatives of $\mathcal{C}_{d}$, and let $f_{i}:=\Theta(K_{i})$ for $i=1,...,n$. Suppose there are complex numbers $\lambda_{1},...,\lambda_{n}$ such that 
\[\lambda_{1}f_{1}+...+\lambda_{n}f_{n}=0.\]
Let $p$ be a prime represented by $\mathrm{t}_{K_{1}}$, which exists thanks to the Dirichlet-Weber Theorem \cite[pg 72]{nark}. Since $\Theta$ is injective it follows from Lemma \ref{primitive}  that $p$ is not represented by $\mathrm{t}_{K_{i}}$ for any  $i=2,...,n$. Thus the coefficient of $q^{p}$ in $f_{i}$ is equal to zero for all $i=2,...,n$. In particular, the coefficient of  $q^{p}$ in $\lambda_{1}f_{1}+...+\lambda_{n}f_{n}$ is $\lambda_{1}f_{1}(p)$ where $f_{1}(p)$ is the coefficient of  $q^{p}$ in $f_1$. Since $p$ is represented by $\mathrm{t}_{K_{1}}$ we have that $f_{1}(p) \neq 0$ hence $\lambda_1=0$. Since the order of the $\lambda$'s is irrelevant we deduce that $\lambda_{i}=0$ for all $i=1,...,n$.
\end{proof}

Let $d$ be a positive fundamental discriminant with $\mathcal{C}_{d} \neq \emptyset$. We have exhibited a natural subspace of \[\mathcal{M}_{1}\left(\Gamma_{0}(|d_{3}|), \left( \frac{d_{3}}{\cdot} \right)\right),\] which has a canonical basis indexed by cubic fields of discriminant $d$.  \begin{definition}\label{positivecubespace} We called this space $V^{+}_{d} :=\mathrm{Span}_{\C}\Theta(\mathcal{C}_{d})$ the {\it positive $d$-cubic space}.\end{definition}

\section{Explicit Scholz reflection principle}

In this section we make some speculations as to how the results mentioned in this paper can be used to give an explicit version of the Scholz reflection principle on cubic fields.\\

Given a fundamental discriminant $\delta$ we define it's 3-rank to be the 3-rank of the ideal class group of the quadratic field of discriminant $\delta$ i.e., \[r_{3}(\delta):=\dim_{\mathbb{F}_{3}}(Cl(\Q(\sqrt{\delta}))
\otimes_{\Z} \mathbb{F}_3).\]
Let $d$ be a positive fundamental discriminant.  The classic Scholz reflection principle says that \[ r_{3}(d) \leq r_{3}(d_{3})\leq  r_{3}(d)+1.\]
This can be stated in terms of subspaces $V_{d}^{+}$ and $V_{d_{3}}$ (see Definitions \ref{positivecubespace} and \ref{cubespace}) of $\displaystyle \mathcal{M}_{1}\left(\Gamma_{0}(|d_{3}|), \left( \frac{d_{3}}{\cdot} \right)\right).$

\begin{proposition} Let $d$ be a positive fundamental discriminant and let $V_{d}^{+}$ (resp. $V_{d_{3}}$) be positive $d$-cubic subspace (reps. $d_{3}$-cubic subspace) of $\displaystyle \mathcal{M}_{1}\left(\Gamma_{0}(|d_{3}|), \left( \frac{d_{3}}{\cdot} \right)\right).$ Then, 

\[\dim_{\C}(V_{d_{3}})=\dim_{\C}(V_{d}) \ or \ \dim_{\C}(V_{d_{3}})=3\dim_{\C}(V_{d})+1.\]

\end{proposition}

\begin{proof}  Let $\delta$ be a fundamental discriminant. By a result of Hasse (see \cite{hasse})  the number of isomorphism classes of cubic fields of discriminant $\delta$ is equal to $\frac{(3^{r_{3}(\delta)}-1)}{2}$. Hence the result is equivalent to the Scholz reflection principle thanks to Theorem \ref{negativebase} and Corollary \ref{base}.
\end{proof}

It follows that the space $V_{d}$ embeds non-canonically into $V_{d_{3}}$. In fact, as seen in the proof above, the existence of such an embedding is equivalent to the left inequality  in the Scholz reflection principle. Suppose that $d$ is such that $\mathcal{C}_{d} \neq \emptyset$. It is natural interesting to ask if we can make a choice for the embedding.

\begin{question}
Is there a canonical embedding $\displaystyle V_{d} \to V_{d_{3}}?$ 
 \end{question}

A stronger version of the above, which can be interpreted as an  Explicit Scholz reflection principle, is the following:

\begin{question}\label{expliScholz} 
Is there a canonical injection $\Theta(\mathcal{C}_{d}) \to \Phi(\mathcal{C}_{d_{3}})?$
\end{question}

If Question \ref{expliScholz} had a positive answer that would mean, thanks to Theorems \ref{negativebase}, \ref{base}, that for every cubic field $K$ of discriminant $d$ there would exists a canonical way to pick a unique cubic field $K_{3}$ of discriminant $d_{3}$.

%\section*{Acknowledgements}

%I thank Bill Casselman, Julia Gordon, Lior Silberman and all the participants of the Number theory and Automorphic forms study seminar 2010/2011 at UBC since it was their seminar what inspired me to write this paper. I also thank Sujatha and Nike Vatsal for their advice and their comments on a previous version of this paper.

\end{document}